\documentclass{amsart}
\pdfoutput=1 
\usepackage{mathtools}
\usepackage{amssymb}
\usepackage{hyperref}

\title[Quasi-isometries and the contracting Gromov product boundary]{Quasi-isometries need not induce homeomorphisms of contracting
  boundaries with the Gromov product topology}
\author{Christopher H. Cashen} 
\thanks{Supported by the Austrian Science Fund
(FWF):M1717-N25.}
\address{
Faculty of Mathematics\newline \indent
University of Vienna\newline \indent
Oskar-Morgenstern-Platz 1\newline \indent
1090 Vienna, Austria}
\email{\href{mailto:christopher.cashen@univie.ac.at}{christopher.cashen@univie.ac.at}}
\urladdr{\href{http://www.mat.univie.ac.at/~cashen}{\nolinkurl{http://www.mat.univie.ac.at/~cashen}}}
\keywords{Gromov boundary, quasi-isometry, contracting geodesic}
\subjclass[2010]{20F67}

%

\hypersetup{
    pdftitle={Quasi-isometries need not induce homeomorphisms of Gromov boundaries},    
    pdfauthor={Christopher H. Cashen},     
    pdfkeywords={Gromov boundary, quasi-isometry, contracting geodesic}, 
    colorlinks=true,       
    linkcolor=black,          
    citecolor=black,        
    filecolor=black,      
    urlcolor=black           
}
\newtheorem*{proposition}{Proposition}
\newcommand{\bdry}{\partial} 
\newcommand{\from}{\colon\thinspace} 
\mathtoolsset{centercolon} 
\DeclareMathOperator{\diam}{\mathrm{diam}}
\begin{document}
\begin{abstract}
We consider a `contracting boundary' of a proper geodesic metric space
consisting of equivalence classes of geodesic rays that behave like
geodesics in a hyperbolic space. 
We  topologize this set via the Gromov product,
in analogy to the topology of the boundary of a hyperbolic space.
We show that when the space is not hyperbolic, quasi-isometries do not
necessarily give homeomorphisms of this boundary.
Continuity can fail even when the spaces are required to be CAT(0).
We show this by constructing an explicit example.
\end{abstract}
\maketitle
\section{Introduction}
In an extremely influential paper, Gromov \cite{Gro87} introduced hyperbolic spaces and their
boundaries. 
Among myriad applications, the topological type of the boundary
provides a quasi-isometry invariant of the space, since quasi-isometries of hyperbolic spaces
extend to homeomorphisms of their boundaries.

Recently Charney and Sultan \cite{ChaSul15} introduced a
quasi-isometry invariant `contracting
boundary' for CAT(0) spaces, consisting of those equivalence classes of
geodesic rays that are `contracting', which is to say that they behave
like geodesic rays in a hyperbolic space in a certain quantifiable
way.
As a set, the contracting boundary of a CAT(0) space can be naturally viewed as a subset of
the visual boundary of the space.
A quasi-isometry does induce a bijection of this contracting subset,
even though it does not necessarily induce a homeomorphism of the
entire visual boundary.
Charney and Sultan were unable to determine if this bijection is a homeomorphism with respect
to the subspace topology. 
Instead, they define a finer topology that they show to be
quasi-isometrically invariant. 
We answer their question in the negative:
quasi-isometries of CAT(0) spaces do not, in general,
induce homeomorphisms of the contracting boundary with the subspace topology.
We do so by constructing an explicit example. 

\section{The contracting boundary and Gromov product topology}
Let $X$ be a proper geodesic metric space.
Let $\gamma$ be a geodesic ray in $X$, and define the closest point
projection map $\pi_\gamma\from
X\to 2^X$ by $\pi_\gamma(x):=\{y\in \gamma\mid
d(x,y)=d(x,\gamma)\}$.
Properness of $X$ guarantees that the empty set is not in the image of $\pi_\gamma$.

A geodesic ray $\gamma$ in $X$ is \emph{contracting} if there exists a
non-decreasing, eventually non-negative function $\rho$ such that
$\lim_{r\to\infty}\rho(r)/r=0$ and such that for
all $x$ and $y$ in $X$, if $d(x,y)\leq d(x,\gamma)$ then $\diam
\pi_\gamma(x)\cup\pi_\gamma(y)\leq \rho(d(x,\gamma))$.
The ray is \emph{strongly contracting} if the function $\rho$ can be
chosen to be bounded.

A geodesic ray $\gamma$ in $X$ is \emph{Morse} if there exists a 
function $\mu$ such that if $\alpha$ is a 
$(\lambda,\epsilon)$--quasi-geodesic with endpoints on $\gamma$, then 
$\alpha$ is contained in the $\mu(\lambda,\epsilon)$--neighborhood of $\gamma$. 

It is not hard to show that a contracting ray is Morse.
Cordes \cite{Cor15} generalizes the Charney-Sultan construction by building a
`Morse boundary' consisting of equivalence classes of Morse geodesic
rays in an arbitrary geodesic metric space. 
In fact, the Morse and contracting properties are equivalent in geodesic metric 
spaces \cite{ArzCasGrub}, so we can just as well call Cordes's
construction the contracting boundary, where we allow rays satisfying the
more general version of contraction defined above.

Let us describe the points of the contracting boundary.
For points $x,\,y,\,z\in X$, the \emph{Gromov product} of $x$ and $y$
with respect to $z$ is defined by:
\[(x\cdot y)_z:=\frac{1}{2}(d(x,z)+d(y,z)-d(x,y))\]

Fix a basepoint $o\in X$ and consider contracting geodesic rays based at $o$.
Define an equivalence relation by $\alpha\sim\beta$ if $\lim_{i,j\to\infty}(\alpha(i)\cdot\beta(j))_o=\infty$.
This relation is transitive on contracting geodesic rays
because contracting rays are Morse and Morse rays are related if and
only if they are at bounded Hausdorff distance from one another.
Define the contracting boundary $\bdry_c X$ to be the set of equivalence classes.
It is easy to see that a quasi-isometry $\phi$ of $X$ induces a
bijection $\bdry_c\phi$ of $\bdry_cX$.
 It remains to define a topology on $\bdry_cX$ and check continuity of
 $\bdry_c\phi$.

The topology is defined by restricting to $\bdry_c X$ the usual
construction of the `ideal' or `Gromov' boundary (cf. \cite{Gro87,BriHae99,
  BucKok09}).
Extend the Gromov product to $\bdry_cX$ by:
\[(\eta\cdot\zeta)_o:=\sup_{\alpha\in\eta,\,\beta\in\zeta}\liminf_{i,j\to\infty}(\alpha(i)\cdot\beta(j))_o\]
Given $\eta\in\bdry_c X$ and $r>0$, define
$U(\eta,r):=\{\zeta\in\bdry_c X\mid (\eta\cdot\zeta)_o\geq r\}$.
Define the \emph{Gromov product topology on $\bdry_cX$} to be the
topology such that a set $U\subset\bdry_c X$ is open if for every $\eta\in U$ there
exists an $r>0$ such that $U(\eta,r)\subset U$.
Denote the
contracting boundary with this topology $\bdry_c^{Gp}X$.

When $X$ is hyperbolic $\bdry_c^{Gp}X$ is the usual Gromov boundary.
When $X$ is CAT(0) $\bdry_c^{Gp}X$ is homeomorphic to the
contracting subset of the visual boundary with the subspace topology.

Note that $\mathcal{U}_\eta:=\{U(\eta,r)\mid r>0\}$ is not necessarily a neighborhood
basis at $\eta$ in this topology. 
We do not need this fact for the conclusions of Section
\ref{sec:path}, but, as it may be of separate interest, we give a
sufficient condition. Some spaces satisfy a \emph{contraction alternative} in the sense that
every  geodesic ray is either strongly contracting or not contracting. 
We will say that such a space \emph{is CA}.
By \cite{ArzCasGrub}, CA is equivalent to ``every Morse geodesic ray is
strongly contracting.''
Examples of CA spaces include hyperbolic spaces, in which geodesic rays are
uniformly strongly contracting, and CAT(0) spaces 
\cite{Sul14}.

\begin{proposition}
  If $X$ is a proper geodesic CA  metric space  then for all
  $\eta\in\bdry_cX$ the set $\mathcal{U}_\eta$ is a neighborhood basis at $\eta$ in
   $\bdry_c^{Gp}X$.
\end{proposition}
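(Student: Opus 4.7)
My plan is to establish a Gromov-hyperbolicity-style four-point inequality at $\eta$ and use it to deduce the neighborhood basis property. First, note that only one direction is non-trivial: every open $U\subset\bdry_c^{Gp}X$ containing $\eta$ automatically contains some $U(\eta,r)$ by definition of the topology, so the task reduces to showing that each $U(\eta,r)$ is itself a neighborhood of $\eta$ (that is, contains an open set containing~$\eta$).

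Invoking the CA hypothesis, I would pick a strongly contracting representative $\alpha\in\eta$ with contraction function bounded by some constant $C$. The key geometric step is to prove: there exists $\delta=\delta(C)$ such that for every $\zeta,\xi\in\bdry_c X$,
\[(\eta\cdot\xi)_o\geq\min\{(\eta\cdot\zeta)_o,(\zeta\cdot\xi)_o\}-\delta.\]
Working with representative rays based at $o$, I would use the bounded geodesic image property of $\alpha$---that a quasi-geodesic whose endpoints project far apart on $\alpha$ must fellow-travel $\alpha$ along its projection---to identify each Gromov product, up to error depending only on $C$, with the length over which the relevant representative ray fellow-travels $\alpha$. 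The inequality then falls out by comparing fellow-travel lengths in the triangle of rays representing $\eta,\zeta,\xi$. Some care is needed because $(\cdot,\cdot)_o$ is defined by a sup over representatives and a liminf over indices; one must exploit the fact that $\alpha$ itself is an admissible representative of $\eta$ and control the limits uniformly using the strong contraction constant.

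With the inequality in hand, one shows $U(\eta,r)$ is a neighborhood of $\eta$ by a standard Frink-style argument for quasi-ultrametric Gromov products. Since $(\eta\cdot\eta)_o=\infty$, the point $\eta$ lies in every $U(\eta,s)$; the inequality shows that if $(\eta\cdot\zeta)_o\geq r+2\delta$ then $U(\zeta,r+\delta)\subseteq U(\eta,r)$, so every such $\zeta$ is an ``interior'' point of $U(\eta,r)$. Promoting this to an honest open subset requires a chain construction: define $V$ to be the set of $\zeta$ connected to $\eta$ by a finite sequence $\eta=\zeta_0,\zeta_1,\ldots,\zeta_n=\zeta$ with each Gromov product $(\zeta_i\cdot\zeta_{i+1})_o$ large enough that telescoped applications of the four-point inequality still witness $(\eta\cdot\zeta_j)_o\geq r+2\delta$ at every intermediate step. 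Such a $V$ contains $\eta$, is contained in $U(\eta,r)$, and is open (a neighboring $\omega$ extends the chain by one more link via the four-point inequality).

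The main obstacle is the four-point inequality. Extracting it from the sup-liminf definition of $(\cdot,\cdot)_o$ using only the strong contracting property of a single representative $\alpha$ requires careful bookkeeping with projections to~$\alpha$ and with the choice of representatives for $\zeta$ and $\xi$. Once the inequality is secured, the topological deduction, although delicate in places, is a standard adaptation of arguments for visual metrics on Gromov boundaries of hyperbolic spaces.
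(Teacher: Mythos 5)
Your geometric core is sound and matches the paper's: using the CA hypothesis to get a strongly contracting representative $\alpha\in\eta$, identifying $(\eta\cdot\zeta)_o$ up to an error $O(C)$ with the time up to which a representative of $\zeta$ fellow-travels $\alpha$ (this is exactly the paper's ``Claim,'' including the bookkeeping over the sup of representatives), and then deducing via the bounded geodesic image property that if $\zeta\in U(\eta,r+2\delta)$ then $U(\zeta,r+\delta)\subseteq U(\eta,r)$ with $\delta=\delta(C_\eta)$. That last implication is precisely the paper's condition $(\circledcirc)$, in fact in a slightly cleaner form (the paper lets the inner radius depend on $C_\zeta$ as well). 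You do leave the four-point inequality unproved and correctly flag it as the main technical burden, but the route you sketch for it is the right one.

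The genuine gap is the chain construction you use to promote this to an open set. Telescoping your inequality along a chain $\eta=\zeta_0,\dots,\zeta_n$ gives a witnessed lower bound $s_{j+1}=\min\{s_j,(\zeta_j\cdot\zeta_{j+1})_o\}-\delta\leq s_j-\delta$, so the bound degrades by $\delta$ at \emph{every} link no matter how large you require the links to be; over chains of unbounded length the invariant $(\eta\cdot\zeta_j)_o\geq r+2\delta$ cannot be maintained, so your $V$ is either not contained in $U(\eta,r)$ or not open (a chain whose witnessed bound has decayed to exactly $r+2\delta$ admits no further extension for any link size). The Frink-type summation lemma that rescues such chains for visual metrics on hyperbolic boundaries needs the four-point inequality with a constant that is \emph{uniform over all triples}, and that is exactly what a general CA space does not provide: $\delta$ depends on the contraction constant of the basepoint of the inequality, and the constants $C_\zeta$ are unbounded over $\bdry_cX$. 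The fix is to drop the chains entirely: since your local statement holds at \emph{every} boundary point (with point-dependent constants), set $V:=\{\zeta\mid \exists s>0,\ U(\zeta,s)\subseteq U(\eta,r)\}$. Then $\eta\in V$, $V\subseteq U(\eta,r)$, and $V$ is open because for $\zeta\in V$ with $U(\zeta,s)\subseteq U(\eta,r)$, one application of the local statement at $\zeta$ gives $U(\zeta,s+2\delta(C_\zeta))\subseteq V$. This one-step argument is the ``standard topological argument'' the paper invokes to reduce the proposition to condition $(\circledcirc)$.
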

\begin{proof}
A standard topological argument shows that $\mathcal{U}_\eta$ is a
neighborhood basis at $\eta$ if and only if:
\begin{equation}
  \label{eq:1}
  \forall r>0,\, \exists R_\eta>r,\, \forall \zeta
  \in U(\eta,R_\eta),\, \exists R_\zeta>0 \text{ such that }
  U(\zeta,R_\zeta)\subset U(\eta,r)\tag{$\circledcirc$}
\end{equation}

 Suppose $\alpha$ is a contracting geodesic ray based at $o$. 
By the contraction alternative it is strongly contracting, so there
exists a $C\geq 0$ bounding its contraction function. 
For brevity, let us say that $\alpha$ is `$C$--strongly contracting'.
The Geodesic Image Theorem (GIT), \cite[cf. Theorem~7.1]{ArzCasGrub},
implies that if $\beta$ is a geodesic segment that stays at least
distance $2C$ from $\alpha$ then the diameter of $\pi_\alpha(\beta)$
is at most $4C$.
It follows easily that if $\beta$ is a geodesic ray
based at $o$ then $\alpha$ and $\beta$ are asymptotic if and only if
$\beta$ is contained in the closed $6C$--neighborhood of $\alpha$.
In fact, this can be improved to $5C$ by a further application of the
definition of strong contraction.

If $A$ is a contracting set and $B$ is bounded Hausdorff distance from
$A$ then $B$ is also contracting, with contraction function determined
by that of $A$ and the Hausdorff distance
\cite[Lemma~6.3]{ArzCasGrub}.
In particular, if $\alpha$ is  $C$--strongly contracting then there
exists a $C'$ depending only on $C$ such that every geodesic ray
$\alpha'$ based at $o$ and asymptotic to $\alpha$ is $C'$--strongly
contracting.
Thus, for a given $\eta\in\bdry_cX$ there exists a $C_\eta$ such that
every geodesic ray $\alpha\in\eta$ is $C_\eta$--strongly contracting.
 
\textbf{Claim:} Given $\eta\in\bdry_cX$ there exists
$K_\eta\geq 0$ such that for all $\zeta\in\bdry_cX\setminus \{\eta\}$
and all $\alpha\in\eta$, $\beta\in\zeta$, if $T(\alpha,\beta):=\max\{t\mid
d(\beta(t),\alpha)=2C_\eta\}$ then $|T(\alpha,\beta)-(\eta\cdot\zeta)_o|\leq K_\eta$.

Assuming the Claim, we show that condition (\ref{eq:1}) is satisfied.
Let $\eta\in\bdry_cX$ and  $r>0$.
Set $R_\eta:=r+2K_\eta+13C_\eta$.
For $\zeta\in U(\eta,R_\eta)$, set $R_\zeta:=(\zeta\cdot\eta)_o+K_\eta+K_\zeta+6C_\eta+4C_\zeta$.
Suppose that $\xi\in U(\zeta,R_\zeta)$.
Choose $\alpha\in\eta$, $\beta\in\zeta$, and $\gamma\in\xi$.
Let $x:=\gamma(T(\beta,\gamma))$ and let $y$ be a point of $\beta$ at distance $2C_\eta$ from
$x$. Let $z:=\beta(T(\alpha,\beta))$. Let $w:=\gamma(T(\alpha,\gamma))$.
\begin{align*}
  d(y,\alpha)&\geq d(y,z)-6C_\eta&\text{by the GIT}\\
&=d(o,y)-d(o,z)-6C_\eta\\
&\geq d(o,x)-d(o,z)-6C_\eta-2C_\zeta\\
&\geq
  (\xi\cdot\zeta)_o-(\zeta\cdot\eta)_o-K_\eta-K_\zeta-6C_\eta-2C_\zeta&\text{by the Claim, twice}\\
&\geq
  R_\zeta-(\zeta\cdot\eta)_o-K_\eta-K_\zeta-6C_\eta-2C_\zeta&\text{since
  }\xi\in U(\zeta,R_\zeta)\\
&=2C_\zeta=d(x,y)
\end{align*}
Since $d(x,y)\leq d(y,\alpha)$, 
the contraction property for $\alpha$ says the diameter of
$\pi_\alpha(x)\cup\pi_\alpha(y)$ is at most $C_\eta$.
With the GIT, this tells us the diameter of
$\pi_\alpha(\beta([T(\alpha,\beta),\infty)))\cup\pi_\alpha(\gamma([T(\alpha,\gamma),\infty)))$
is at most $9C_\eta$.
Thus, $d(w,z)\leq 13C_\eta$.
The Claim gives us $d(o,z)\geq R_\eta-K_\eta$, so $d(o,w)\geq
d(o,z)-13C_\eta\geq R_\eta-K_\eta-13C_\eta>
r+K_\eta$, which, by the Claim again, yields $(\xi\cdot\eta)_o\geq r$. 
Hence, $U(\zeta,R_\zeta)\subset U(\eta,r)$.

\medskip

It remains to prove the claim.
Let $\alpha,\,\alpha'\in\eta$ and $\beta,\,\beta'\in\zeta$ be arbitrary.

Consider $s,\,t\gg T(\alpha,\beta)$.
Let $\gamma$ be a geodesic from
$\alpha(s)$ to $\beta(t)$. Let $z$ be the last point on $\gamma$ at
distance $2C_\eta$ from $\alpha$.
Let $y\in\pi_\alpha(\beta(t))$. Let $x:=\beta(T(\alpha,\beta))$.
The GIT says the projection of the subsegment of $\beta$ between $x$
and $\beta(t)$ has diameter at most $4C_\eta$, as does the projection
of the subsegment of $\gamma$ from $z$ to $\beta(t)$.
Thus $d(x,y)\leq 6C_\eta$ and $d(y,z)\leq 6C_\eta$.
It follows that $|(\alpha(s)\cdot\beta(t))_o-d(o,y)|\leq 6C_\eta$, so:
\begin{equation}
  \label{eq:2}
 |(\alpha(s)\cdot\beta(t))_o-T(\alpha,\beta)|\leq 12C_\eta 
\end{equation}
 
Consider the effect of replacing $\alpha$ with $\alpha'$. 
For every $t$ we have
$d(\beta(t),\alpha)\geq t-T(\alpha,\beta)-6C_\eta$, and $\alpha$ and
$\alpha'$ have Hausdorff distance at most $5C_\eta$, so
$d(\beta(t),\alpha')\geq t-T(\alpha,\beta)-11C_\eta$.
Since $d(\beta(T(\alpha',\beta)),\alpha')=2C_\eta$ we have
$T(\alpha',\beta)\leq T(\alpha,\beta)+13C_\eta$. 
The argument is symmetric in $\alpha$ and $\alpha'$, so we conclude:
\begin{equation}
  \label{eq:3}
  |T(\alpha,\beta)-T(\alpha',\beta)|\leq 13C_\eta
\end{equation}

Now consider the effect of replacing $\beta$ with $\beta'$. 
The Hausdorff distance between them is at most $5C_\zeta$.
This does not admit any a priori bound in terms of $C_\eta$.
However, eventually points of $\beta$ are
closer to $\beta'$ than they are to $\alpha$, so we can invoke strong
contraction of $\alpha$ and the GIT, twice,  to say:
\[\diam \pi_\alpha(\beta([T(\alpha,\beta),\infty)))\cup
\pi_\alpha(\beta'([T(\alpha,\beta'),\infty)))\leq 9C_\eta\]
Which tells us:
\begin{equation}
  \label{eq:4}
  |T(\alpha,\beta)-T(\alpha,\beta')|\leq 13C_\eta
\end{equation}
Combining equations (\ref{eq:2}),  (\ref{eq:3}),  and (\ref{eq:4}), we
have, for all $s,\, s',\, t,\, t'$ sufficiently large, that
$|(\alpha(s)\cdot\beta(t))_o-(\alpha'(s')\cdot\beta'(t'))_o|\leq 50C_\eta$. 
Thus, for any $\alpha\in\eta$ and $\beta\in\zeta$ and for all
sufficiently large $s$ and $t$ we have
$|(\alpha(s)\cdot\beta(t))_o-(\eta\cdot\zeta)_o|\leq 50C_\eta$.
A further application of equation (\ref{eq:2}) completes the proof of
the Claim with $K_\eta:=62C_\eta$.
\end{proof}

\section{Pathological Examples}\label{sec:path}
Construct a proper geodesic metric space $X$ from rays $\alpha$, $\beta$,
and $\gamma_i$ for $i\in\mathbb{N}$ as follows. 
Identify $\alpha(0)$ and $\beta(0)$, and take this to be the basepoint $o$.
For each $i$ connect $\gamma_i(0)$ to $\alpha(i)$ and $\beta(i)$ by
segments of length $2^i$.
Then the $\gamma_i$ are strongly contracting, and $\alpha$ and $\beta$
are contracting rays whose contracting function $\rho$ can be taken to
be logarithmic.
The essential point is that the projection of $\gamma_i(0)$ to
$\alpha\cup\beta$ has diameter $2i$, while the distance from
$\gamma_i(0)$ to $\alpha\cup\beta$ is $2^i$.

The contracting boundary of $X$ consists of one point for each of the rays
$\alpha$, $\beta$, and $\gamma_i$, which we denote $\alpha(\infty)$,
$\beta(\infty)$, and $\gamma_i(\infty)$, respectively. 
Compute the Gromov products of boundary points:
$(\alpha(\infty)\cdot\gamma_i(\infty))_o=i=(\beta(\infty)\cdot\gamma_i(\infty))_o$,
while $(\alpha(\infty)\cdot\beta(\infty))_o=0$.
The sequence
$(\gamma_i(\infty))_i$ converges to both $\alpha(\infty)$ and
$\beta(\infty)$ in $\bdry_c^{Gp}X$.
In this example $\bdry^{Gp}_cX$ is compact but not Hausdorff.

Now consider the space $Y$ obtained from $X$ by
redefining, for each $i$, the length of the segment connecting
$\gamma_i(0)$ to $\beta$ to be $2^i-2i$.
The identity map is a
quasi-isometry, but in the new metric
$(\alpha(\infty)\cdot\gamma_i(\infty))_o=0$.
 The sequence $(\gamma_i(\infty))_i$ does not converge to
 $\alpha(\infty)$ in $\bdry_c^{Gp}Y$.
Thus, $\bdry_c\mathrm{Id}\from\bdry_c^{Gp}X\to\bdry_c^{Gp}Y$ is not continuous.
\medskip

Next, we construct a CAT(0) example.
Let $X'$ be the universal cover of the Euclidean plane minus a ball of
radius one. Parameterize $X'$ by polar coordinates $\mathbb{R}\times
[1,\infty)$.
Let $\alpha\from [0,\infty)\to X':t\mapsto (t,1)$ and $\beta\from
[0,\infty)\to X':t\mapsto (-t,1)$.
Each of these geodesic rays is $\pi$--strongly contracting. 

Let $X$ be the proper CAT(0) space obtained from $X'$ by
attaching, for each $i\in\mathbb{N}$, a geodesic ray $\gamma_i$ with
$\gamma_i(0)=(i,2^i)\in X'$.
These rays are also strongly contracting.

The contracting boundary of $X$ consists of points corresponding
to the $\gamma_i(\infty)$ and the two points $\alpha(\infty)$ and
$\beta(\infty)$.
Since $\alpha$ is strongly contracting, it follows that
$(\alpha(\infty)\cdot\gamma_i(\infty))_o=i$ up to bounded error.
Thus, the sequence $(\gamma_i(\infty))_i$ converges to
$\alpha(\infty)$ in $\bdry_c^{Gp}X$. 

Let $Y$ be the proper CAT(0) space obtained from $X'$ by
attaching, for each $i\in\mathbb{N}$, a geodesic ray $\gamma'_i$ with
$\gamma'_i(0)= (0,2^i)\in X'$.
Define $\phi$ to be the map $(t,r)\mapsto (t-\log_2(r),r)$ on $X'$, so
that $\phi(\gamma_i(0))=\gamma'_i(0)$.
This is a variation of the well-known logarithmic spiral
quasi-isometry of the Euclidean plane.
Extend $\phi$ to all of $X$ by  isometries $\gamma_i\to\gamma_i'$ for
each $i$. This gives a quasi-isometry $\phi\from X\to Y$, but points
in $\bdry_c^{Gp}Y$ are isolated, so $\bdry_c\phi\from\bdry_c^{Gp}X\to\bdry_c^{Gp}Y$ is not continuous.
\medskip

Interesting open questions remain: If $\phi\from X\to Y$
is a quasi-isometry between proper geodesic spaces that have cocompact isometry groups
and such that $X$ and $Y$ are CAT(0) (or, more generally, CA), must
$\bdry_c\phi\from \bdry_c^{Gp}X\to \bdry_c^{Gp}Y$ be a homeomorphism?
Must $\bdry_c^{Gp}X$ and $\bdry_c^{Gp}Y$ be homeomorphic?
Note that for visual boundaries of CAT(0) spaces the second question
is much harder than the first \cite{CroKle00}.




\begin{thebibliography}{1}

\bibitem{ArzCasGrub}
Goulnara~N. Arzhantseva, Christopher~H. Cashen, Dominik Gruber, and David Hume,
  \emph{Characterizations of {M}orse geodesics via superlinear divergence and
  sublinear contraction}, preprint (2016),
  \href{http://arXiv.org/abs/1601.01897}{{\texttt{arXiv:1601.01897}}}.

\bibitem{BriHae99}
Martin~R. Bridson and Andr{\'{e}} Haefliger, \emph{Metric spaces of
  non-positive curvature}, Grundlehren der mathematischen Wissenschaften, vol.
  319, Springer, Berlin, 1999.

\bibitem{BucKok09}
Stephen~M. Buckley and Simon~L. Kokkendorff,
  \emph{\href{http://dx.doi.org/10.1090/S0002-9947-08-04580-7}{Comparing the
  {F}loyd and ideal boundaries of a metric space}}, Trans. Amer. Math. Soc.
  \textbf{361} (2009), no.~2, 715--734.

\bibitem{ChaSul15}
Ruth Charney and Harold Sultan,
  \emph{\href{http://doi.org/10.1112/jtopol/jtu017}{Contracting boundaries of
  {$\rm CAT(0)$} spaces}}, J. Topol. \textbf{8} (2015), no.~1, 93--117.

\bibitem{Cor15}
Matthew Cordes, \emph{\href{http://arxiv.org/abs/1502.04376}{Morse boundaries
  of proper geodesic metric spaces}}, preprint (2015),
  \href{http://arXiv.org/abs/1502.04376}{{\texttt{arXiv:1502.04376}}}.

\bibitem{CroKle00}
Christopher~B. Croke and Bruce Kleiner, \emph{Spaces with nonpositive curvature
  and their ideal boundaries}, Topology \textbf{39} (2000), 549--556.

\bibitem{Gro87}
M.~Gromov, \emph{Hyperbolic groups}, Essays in group theory, Math. Sci. Res.
  Inst. Publ., vol.~8, Springer, New York, 1987, pp.~75--263.

\bibitem{Sul14}
Harold Sultan,
  \emph{\href{http://dx.doi.org/10.1007/s10711-013-9851-4}{Hyperbolic
  quasi-geodesics in {CAT(0)} spaces}}, Geom. Dedicata \textbf{169} (2014),
  no.~1, 209--224.

\end{thebibliography}
\end{document}